\documentclass[a4paper,12pt]{article}


\usepackage{amsfonts,amssymb,amsmath,amsthm,latexsym,epsfig,amscd,bbm,stmaryrd}

\usepackage[english]{babel}
\selectlanguage{english}

\topmargin -0.5in \textheight 9in \oddsidemargin 0.15in
\evensidemargin 0.25in \textwidth 6.15in

\parskip=3pt plus 1pt minus 1pt


\makeatletter \@addtoreset{equation}{section}
\makeatother

\makeatletter \@addtoreset{enunciato}{section}
\makeatother

\newcounter{enunciato}[section]

\newtheorem{ittheorem}{Theorem}
\newtheorem{itlemma}{Lemma}
\newtheorem{itproposition}{Proposition}
\newtheorem{itdefinition}{Definition}
\newtheorem{itremark}{Remark}
\newtheorem{itclaim}{Claim}
\newtheorem{itfact}{Fact}
\newtheorem{itconjecture}{Conjecture}
\newtheorem{itcorollary}{Corollary}

\newenvironment{theorem}{\addtocounter{enunciato}{1}
\begin{ittheorem}}{\end{ittheorem}}

\newenvironment{lemma}{\addtocounter{enunciato}{1}
\begin{itlemma}}{\end{itlemma}}

\newenvironment{proposition}{\addtocounter{enunciato}{1}
\begin{itproposition}}{\end{itproposition}}

\newenvironment{definition}{\addtocounter{enunciato}{1}
\begin{itdefinition}}{\end{itdefinition}}

\newenvironment{remark}{\addtocounter{enunciato}{1}
\begin{itremark}}{\end{itremark}}

\newenvironment{conjecture}{\addtocounter{enunciato}{1}
\begin{itconjecture}}{\end{itconjecture}}

\newenvironment{corollary}{\addtocounter{enunciato}{1}
\begin{itcorollary}}{\end{itcorollary}}

\newcommand{\be}[1]{\begin{equation}\label{#1}}
\newcommand{\ee}{\end{equation}}

\newcommand{\bl}[1]{\begin{lemma}\label{#1}}
\newcommand{\el}{\end{lemma}}

\newcommand{\br}[1]{\begin{remark}\label{#1}}
\newcommand{\er}{\end{remark}}

\newcommand{\bt}[1]{\begin{theorem}\label{#1}}
\newcommand{\et}{\end{theorem}}

\newcommand{\bd}[1]{\begin{definition}\label{#1}}
\newcommand{\ed}{\end{definition}}

\newcommand{\bp}[1]{\begin{proposition}\label{#1}}
\newcommand{\ep}{\end{proposition}}

\newcommand{\bc}[1]{\begin{corollary}\label{#1}}
\newcommand{\ec}{\end{corollary}}

\newcommand{\bcj}[1]{\begin{conjecture}\label{#1}}
\newcommand{\ecj}{\end{conjecture}}

\newcommand{\hN}{\widehat{N}}


\renewcommand{\mid}{\,\middle|\,}
\newcommand{\ind}{\mathbbm{1}}

\def \Z {{\mathbb Z}}
\def \R {{\mathbb R}}

\def \N {{\mathbb N}}

\def \ba {\begin{array}}
\def \ea {\end{array}}

\def \P  {{\mathbb P}}
\def \E  {{\mathbb E}}

\def \di {\mathrm{d}}

\begin{document}

\title{Transient random walk in symmetric exclusion:\\ limit theorems and an Einstein relation.}

\author{\renewcommand{\thefootnote}{\arabic{footnote}}
L.\ Avena \footnotemark[1]
\\
\renewcommand{\thefootnote}{\arabic{footnote}}
R.\ dos Santos \footnotemark[2]
\\
\renewcommand{\thefootnote}{\arabic{footnote}}
F.\ V\"ollering \footnotemark[2]}

\footnotetext[1]{
Institut f\"ur Mathematik, Universit\"at Z\"urich, Winterthurerstrasse 190, Z\"urich, CH- 8057,
Switzerland}

\footnotetext[2]{
Mathematical Institute, Leiden University, P.O.\ Box 9512,
2300 RA Leiden, The Netherlands}

\maketitle

\begin{abstract}
We consider a one-dimensional simple symmetric exclusion process
in equilibrium, constituting a dynamic random environment for a
nearest-neighbor random walk that on occupied/vacant sites has two
different local drifts to the right. We construct a renewal
structure from which a LLN, a functional CLT and large deviation bounds for the random walk 
under the annealed measure follow. We further prove an Einstein relation under a
suitable perturbation. A brief discussion on the topic of random walks in slowly mixing
dynamic random environments is presented.\\

\vspace{0.1cm}\noindent
{\it Acknowledgement.} The authors are grateful to Frank den Hollander and Vladas Sidoravicius
for fruitful discussions.

\vspace{0.2cm}\noindent
{\it MSC} 2010. Primary 60K37; Secondary 60Fxx, 82C22.\\
{\it Key words and phrases.} Random walk, dynamic random environment,
exclusion process, law of large numbers, central limit theorem, Einstein relation, regeneration times.
\end{abstract}


\section{Introduction: model, results and motivation}
\label{sec:Introduction}


\subsection{Model}
\label{subsec:model}

Let \be{IPS}\xi = (\xi_t)_{t \geq 0} \quad \mbox{ with } \quad
\xi_t = \big(\xi_t(x)\big)_{x\in\Z} \ee be a c\`adl\`ag Markov
process with state space $\Omega=\{0,1\}^\Z$. We interpret the
states of $\xi$ by saying that at time $t$ the site $x$ is
\emph{occupied by a particle} if $\xi_t(x)=1$ and is \emph{vacant}
or, alternatively, \emph{occupied by a hole}, if $\xi_t(x)=0$. For
an initial configuration $\eta\in\Omega$, we write $P^\eta$ to
denote the law of $\xi$ starting from $\xi_0=\eta$, which is a
probability measure on the path space $D_\Omega[0,\infty)$, i.e.
the set of all trajectories with values in $\Omega$ which are
right continuous and have left limits, see \cite{Li85}, Section
I.1. We denote by \be{SEPlaw} P^{\mu}(\cdot) = \int_{\Omega}
P^\eta(\cdot)\,\mu(\di\eta) \quad \text{ on }D_\Omega[0,\infty)
\ee the law of $\xi$ when $\xi_0$ is drawn from a probability
measure $\mu$ on $\Omega$.

Having fixed a realization of $\xi$, let \be{X} X = (X_t)_{t\geq
0} \ee be the Random Walk (RW) that starts from $0$ and has local
transition rates \be{rwtrans}
\begin{aligned}
&x \to x+1 \quad \mbox{ at rate } \quad \alpha_1\,\xi_t(x) + \alpha_0\,[1-\xi_t(x)],\\
&x \to x-1 \quad \mbox{ at rate } \quad \beta_1\,\xi_t(x) + \beta_0\,[1-\xi_t(x)],
\end{aligned}
\ee where \be{drift1}
\alpha_0,\alpha_1,\beta_0,\beta_1\in(0,\infty), \ee
i.e., on occupied (resp. vacant) sites the random walk jumps
to the right at rate $\alpha_1$ and to the left at rate $\beta_1$
(resp. $\alpha_0$ and $\beta_0$). 

We write $P_X^\xi$ to denote the law of $X$ when $\xi$ is fixed
and, for an initial measure $\mu$, \be{Pxmudef} \P_{\mu}(\cdot) =
\int_{D_\Omega[0,\infty)} P_X^\xi(\cdot)\,P^{\mu}(\di\xi)
\quad\text{ on }D_\Z[0,\infty) \ee to denote the law of $X$
averaged over $\xi$. We refer to $P_X^\xi$ as the \emph{quenched}
law and to $\P_{\mu}$ as the \emph{annealed} law. 

We are interested in studying the RW $X$ when $\xi$ is a
one-dimensional \emph{Simple Symmetric Exclusion Process} (SSEP), i.e., an
interacting particle system whose generator $L$ acts on a real
cylinder function $f$ as \be{Generator} (Lf)(\eta) = \sum_{
{x,y\in\Z} \atop {x \sim y} } \left[f(\eta^{xy})-f(\eta)\right],
\qquad \eta\in\Omega, \ee where the sum runs over unordered pairs
of neighboring sites in $\Z$, and $\eta^{xy}$ is the configuration
obtained from $\eta$ by interchanging the states at sites $x$ and
$y$. For any $\rho \in (0,1)$, the Bernoulli product measure with
density $\rho$, which we denote by $\nu_{\rho}$, is an ergodic
measure for the SSEP (see \cite{Li85}, Theorem VIII.1.44).

We will assume that \be{drift2} \alpha_0\wedge \alpha_1 - \beta_0
\vee \beta_1> 1. \ee Condition \eqref{drift2} implies that the
local drifts on occupied and vacant sites, $\alpha_1-\beta_1$ and
$\alpha_0-\beta_0$ respectively, are both bigger than $1$. Thus
the RW $X$ is not only transient (indeed, non-nestling), but travels faster than local
information can spread in the SSEP. This is a strong property which is key to our argument;
it allows us, roughly speaking, to overcome the slow mixing in time of the SSEP with the good mixing 
in space of the Bernoulli measure $\nu_\rho$, giving rise to a regenerative structure for the random walk.

\subsection{Results}
\label{subsec:results}

For all results below we assume \eqref{drift1} and \eqref{drift2}, and fix $\rho \in [0,1]$.

\bt{LLNforEP}{\bf (Law of large numbers)}

There exists $v\geq\alpha_0\wedge \alpha_1 - \beta_0 \vee \beta_1>1$ such that 
\be{LLN1forEP} \lim_{t\to\infty} \frac{X_t}{t} = v
\qquad \P_{\nu_\rho}\text{-a.s. and in } L^p \;\; \forall \;\; p
\ge 1. \ee
\et

\bt{LDforEP}{\bf(Annealed large deviations)}

For any $\epsilon > 0$,
\be{eq:expconc}
\limsup_{t \to \infty} t^{-1}\log \P_{\nu_{\rho}}(|X_t-tv| \ge t\epsilon ) < 0.
\ee
\et

\bt{CLTforEP}{\bf (Annealed functional central limit theorem)}

There exists $\sigma\in(0,\infty)$ such that, under $\P_{\nu_\rho}$, 
\be{fCLT}\left(\frac{X_{nt}-ntv}{\sqrt{n}} \right)_{t \ge 0} \Rightarrow \sigma B
\ee
where $B$ is a standard Brownian motion.
\et

For the next result, we interpret the model of Section \ref{subsec:model} as a
perturbation of a homogeneous RW. We regard the exclusion process as
an oscillating random field which interacts weakly with the RW, affecting its asymptotic speed. 
The Einstein relation then says that the rate of change of the speed when the interaction 
is very weak is given by the diffusion coefficient of the unperturbed walk.
This is a form of the fluctuation-dissipation theorem from statistical physics, 
which concerns the response of thermodynamical systems to small external perturbations, 
connecting it with spontaneous fluctuations of the system. 
See \cite{DeDe10, FeGoLe85, GaMaPi12, KoOl05} for more information.

\bt{ERforEP}{\bf (Einstein Relation)}\\
Fix $\alpha,\beta >0$ with $\alpha-\beta>1$. Let
$\lambda\in(0,\infty)$ be the perturbation strength, and fix 
interaction constants $F_0,F_1 \in \R$ with $F_0+F_1=1$. Let the
perturbed rates be given by: \be{Perturbation}
\begin{aligned}
&\alpha_0 = \alpha \exp\left\{ F_{0}\frac{\lambda}{1-\rho}+o(\lambda) \right\},
&&\beta_0=\beta \exp\left\{ -F_{0}\frac{\lambda}{1-\rho}+o(\lambda) \right\},\\
&\alpha_1 = \alpha \exp\left\{ F_{1}\frac{\lambda}{\rho}+o(\lambda) \right\},
&&\beta_1=\beta \exp\left\{ -F_{1}\frac{\lambda}{\rho}+o(\lambda) \right\}.
\end{aligned}
\ee
When $\lambda$ is small enough, \eqref{drift2} is satisfied.
For such $\lambda$, let $v(\lambda)$ be the speed as in \eqref{LLN1forEP}. 
Then
\be{ER} \lim_{\lambda\downarrow
0}\frac{v(\lambda)-v(0)}{\lambda}=\alpha+\beta. 
\ee 
\et

\vspace{0.3cm}
The rest of the paper is organized as follows. In Section
\ref{subsec:RWRE}, we present a brief introduction to RW in
\emph{static} and \emph{dynamic} Random Environment (RE), and in
Section \ref{subsec:slowmix} we discuss slowly mixing dynamic REs. 
In Section \ref{sec:construction}, we construct a particular version of our model. 
Section \ref{sec:regeneration} is the core of the paper; there we
develop a regeneration scheme that is used in Section~\ref{sec:limthms} 
to prove Theorems \ref{LLNforEP}--\ref{ERforEP}.

\subsection{Random walks in static and dynamic random environments}
\label{subsec:RWRE} Random Walks in Random Environments (RWRE) on the
integer lattice are RWs on $\Z^d$ evolving according to random
transition kernels, i.e., their transition probabilities depend on
a random field (\emph{static} case) or a random process
(\emph{dynamic} case) called RE.

RWs in \emph{static} REs have been an intensive research area
since the early 1970's (see e.g.\ \cite{So75}). One-dimensional
models are well understood. In particular, recurrence vs.\
transience criteria, laws of large numbers and central limit theorems
have been derived, as well as quenched and annealed large
deviation principles. In higher dimensions the state of the art is
more modest and many important questions still remain open. For an
overview of the results, we refer the reader to
\cite{Sz02a,Ze04,Ze06}.

RWs in \emph{dynamic} REs in dimension $d$ can be viewed as RWs in
\emph{static} REs in dimension $d+1$ by considering the time as an
additional dimension (see e.g.\ \cite{AvdHoRe11}). Therefore even
the one dimensional case is still far from being understood, in
particular when the RE has dependencies in space and time. Three
classes of dynamic REs have been studied in the literature so far:
\begin{itemize}
\item[(1)] \emph{Independent in time}: globally updated at each
unit of time (see e.g.
\cite{Be04,BoMiPe04,BoMiPe09,JoRaAg11,RaAgSe05,Yi09b}); \item[(2)]
\emph{Independent in space}: locally updated according to
single-site independent Markov chains (see e.g.
\cite{BaZe06,BoMiPe07,DoLi09}); \item[(3)] \emph{Dependent in
space and time}
(\cite{AvdHoRe11,AvdHoRe10,BrKu09,DoKeLi08, dHodSa12, dHodSaSi11, JoRaAg11, ReVo11}).\end{itemize} The
focus of these references is: Law of Large Numbers (LLN),
invariance principles and large deviations estimates. All papers
require additional assumptions on the RE, e.g. a weak influence on
the RW (i.e., the RW is a small perturbation of a homogeneous one)
or a strong decay of space-time correlations.
We refer the reader to \cite{AvdHoRe10,DoKeLi08} for further references.

\subsection{Slowly mixing dynamic random environments
and the exclusion process} \label{subsec:slowmix} 
In \cite{AvdHoRe11}, a strong LLN was proved for RWs on a class of
Interacting Particle Systems (IPS) satisfying a 
space-time mixing property called \emph{cone-mixing}. This
mixing property can be described as the requirement
that all the states of the IPS inside a linearly growing
space-time region (a space-time cone) depend weakly on the states
of the IPS inside a space plane far below the tip of the cone.
The proof of the LLN in \cite{AvdHoRe11} uses a regeneration-time
argument introduced in \cite{CoZe04} for \emph{static} RE, which
was adapted to \emph{dynamic} (space-time ergodic) REs satisfying the above described cone-mixing property.
However, many interesting examples, which we call \emph{slowly
mixing} dynamic REs, are not cone-mixing due to slow or non-uniform decay of
space-time correlations. Examples include the exclusion process (and other Kawasaki dynamics), 
Poissonian fields of independent simple random walks (and other zero-range processes) 
and the supercritical contact process.
In these systems, the decay of correlations is not uniform, which prevents the use
of regeneration strategies such as in \cite{CoZe04}, \cite{AvdHoRe11} and \cite{dHodSaSi11}. 
In two recent papers a LLN is obtained for RWs in slowly mixing REs: in \cite{dHoKeSi11} for the case of a high-density Poissonian field of independent simple random walks, and in \cite{dHodSa12} for a supercritical contact process.

It is worthwhile to investigate examples of slowly mixing dynamic REs, 
as significantly different behavior may occur
in comparison to fast-mixing REs such as cone-mixing REs. Indeed, in
\cite{AvdHoRe10,AvTh12} the case of a RW $X$ on the
one-dimensional SSEP process with opposite drifts on top of
particles and holes (i.e.\ dropping \eqref{drift2} and assuming
$\alpha_1=\beta_0$, $\alpha_0=\beta_1$ in \eqref{drift1}) was
considered. In particular, in \cite{AvTh12}, simulation results
for the asymptotic speed of $X$ are presented which suggest that
$X$ is recurrent if and only if $\rho=\frac{1}{2}$, and that $X$
is ballistic as soon as it is transient. Thus, the transient
regime with zero speed, which is known to occur for static REs
(see e.g.\ \cite{So75}), seems to disappear in the dynamic setup.
The interpretation is that even `slow' particle motion in the RE
makes it hard for a `trap' to survive for too long. 
We recall that a `trap' is a localized region in which the walk 
spends a long time because the
transition probabilities push it towards the center of this
region. Nevertheless, similarly to the one-dimensional static RE
and in contrast to fast-mixing dynamic RE, Theorem 1.4 in
\cite{AvdHoRe10} shows that, when we look at large deviations
estimates for the empirical speed of $X$, the slow mixing of the 
SSEP process allows for a trap to persist up to
time $t$ with a probability that decays sub-exponentially in $t$.
Furthermore, other numerical results in \cite{AvTh12}
suggest non-diffusive scaling limits for $X$ in a certain
parameter region, as happens in the static case (see e.g.\
\cite{KeKoSp75,Si82}).

In the present paper, we take as dynamic random environment the SSEP, 
which is a natural example where mixing is both slow and non-uniform 
due to the conservation of particles. 
We study the RW under the strong drift assumption \eqref{drift2}, which significantly facilitates the analysis. 
Our results show that, in this case, the anomalous
behavior of fluctuations present in the static setting disappears; this is expected since such behavior 
is connected to trapping phenomena which are hindered by a positive minimum drift.
We believe that the regeneration strategy developed in Section \ref{sec:regeneration} 
could be adapted to other dynamic REs (for instance, asymmetric exclusion
processes or a Poissonian field of independent RWs) under
similar drift assumptions.

\section{Construction of the model}
\label{sec:construction} 

In this section, we give a particular construction of the random walk and 
of the exclusion process. With this construction, we introduce in Section \ref{subsec:minwalk} the notion of \emph{marked agents} and obtain as a consequence Lemma \ref{FreshBernoulli}, which plays a key role throughout the paper.

\newpage
\subsection{Coupling with the minimal walker} \label{subsec:minwalk}
Here we show how the RW $X$ defined in \eqref{X} can be constructed from
four independent Poisson processes and the RE. The following
construction is valid for any general dynamic RE given by a
two-state IPS.

Define the following set of Poissonian clocks, each independent of all
the other variables:
\be{Poissonclocks}
\begin{array}{lll}
N^{+}=(N_t^+)_{t\geq0} & \text{ with rate } & \alpha_0\wedge\alpha_1, \\
N^{-}=(N_t^-)_{t\geq0} & \text{ with rate } & \beta_0\wedge\beta_1, \\
\hN^{+}=(\hN_t^+)_{t\geq0} & \text{ with rate } & \alpha_0\vee\alpha_1-\alpha_0\wedge\alpha_1, \\
\hN^{-}=(\hN_t^-)_{t\geq0} & \text{ with rate } & \beta_0\vee\beta_1-\beta_0\wedge\beta_1.
\end{array}
\ee

Now define $X$ by the following rules:
\begin{enumerate}
\item $X$ jumps only when one of the Poisson clocks ring;
\item When $N^+$ rings, $X$ jumps to the right; when $N^-$
rings, $X$ jumps to the left;
\item When $\hN^+$ rings, $X$
jumps to the right if the state $j$ at its position is such that $\alpha_j = \alpha_0 \vee \alpha_1$. When
$\hN^-$ rings, $X$ jumps to the left if $\beta_j = \beta_0 \vee
\beta_1$. Otherwise, $X$ stays still.
\end{enumerate}
In this construction, $X$ is a function of $(N^\pm,\hN^\pm,\xi)$ and depends on the environment only through the states it sees when $\hN^+$ or $\hN^-$ ring. Let $M = (M_t)_{t \ge 0}$ be defined by
\be{defminimalwalker}
M_t := N_t^{+} -N_t^- - \hN_t^-.
\ee
By construction, for any $t \ge s \ge 0$,
\be{MStocDom}
M_t-M_s\leq X_t - X_s, \ee
and we are thus justified to call $M$ the \emph{minimal walker}.

Let
\be{Nt}
N_t:=N^+_t+N^-_t+\hN^+_t+\hN^-_t
\ee
be the number of attempted jumps before time $t$ and
\be{Nt2}
\hN_t := \hN_t^+ +\hN_t^-
\ee
the number of times before time $t$ when the random walk observes the environment.
Note that, by construction,
\be{dombym} |X_t - X_s| \le N_t - N_s \;\;\; \forall \;\; t \ge s \ge 0. \ee
As a consequence, for all $p \ge 1$, there is a $C(p) \in
(0,\infty)$ such that \be{Lpbound}
\sup_{\eta\in\Omega}\E_{\eta}[|X_t|^p] \le C(p) t^p. \ee Therefore,
by uniform integrability, as soon as a LLN holds, convergence in $L^p$, $p \ge 1$, will follow as well.


\subsection{Graphical representation: SSEP from the interchange process}
\label{subsec:graphrep}

The SSEP can be constructed from a graphical representation as follows.
Let
\be{Poisson} I=\left(I(x) \right)_{x\in\Z} \ee
be a collection of i.i.d. Poisson processes with rate $1$.
Draw the events of $I(x)$ on $\Z \times [0,\infty)$ as arrows
between the points $x$ and $x+1$. Then, for each $t>0$ and $x \in \Z$,
there exists (a.s.) a unique path in $\Z \times [0,\infty)$ starting at $(x,t)$ and ending in $\Z \times \{0\}$
going downwards in time but forced to cross any arrows it encounters; see Figure \ref{Interchange}.
Denote by $\gamma_t(x)\in\Z$ the end position of this path. The process $\gamma = (\gamma_t)_{t \ge 0}$ is called the \emph{interchange process}.
\begin{figure}[hbtp]
\vspace{1cm}
\begin{center}
\setlength{\unitlength}{0.3cm}
\begin{picture}(20,10)(0,0)
\put(0,0){\line(22,0){22}} \put(0,11){\line(22,0){22}}

\put(2,0){\line(0,12){12}}
\put(5,0){\line(0,12){12}} \put(8,0){\line(0,12){12}}
\put(11,0){\line(0,12){12}} \put(14,0){\line(0,12){12}}
\put(17,0){\line(0,12){12}} \put(20,0){\line(0,12){12}}

\qbezier[15](2.1,4)(3.5,4)(4.9,4)
\qbezier[15](5.1,6)(6.5,6)(7.9,6)
\qbezier[15](8.1,8.5)(9.5,8.5)(10.9,8.5)
\qbezier[15](11.1,3)(12.5,3)(13.9,3)
\qbezier[15](11.1,6)(12.5,6)(13.9,6)
\qbezier[15](14.1,1.5)(15.5,1.5)(16.9,1.5)
\qbezier[15](14.1,9)(15.5,9)(16.9,9)
\qbezier[15](17.1,6.5)(18.5,6.5)(19.9,6.5)

{\thicklines \qbezier(11,0)(11,3)(11,3)
\qbezier(11,3)(14,3)(14,3)
\qbezier(14,3)(14,6)(14,6)
\qbezier(14,6)(11,6)(11,6)
\qbezier(11,6)(11,8.5)(11,8.5)
\qbezier(11,8.5)(8,8.5)(8,8.5)
\qbezier(8,8.5)(8,11)(8,11)}


\put(2.8,4.2){$\leftrightarrow$}
\put(5.8,6.2){$\leftrightarrow$}
\put(8.8,8.7){$\leftrightarrow$}
\put(11.8,3.2){$\leftrightarrow$}
\put(11.8,6.2){$\leftrightarrow$}
\put(14.8,1.7){$\leftrightarrow$}
\put(14.8,9.2){$\leftrightarrow$}
\put(17.8,6.7){$\leftrightarrow$}

\put(10.4,-1.2){$\gamma_t(x)$} \put(8.4,11.6){$x$}
\put(-1.2,-.3){$0$}
\put(-1.2,10.7){$t$}\put(23,0){$\mathbb{Z}$}
\put(11,0){\circle*{.35}}
\put(8,11){\circle*{.35}}
\end{picture}
\end{center}
\caption{\small Graphical representation. The dotted lines represent
events of $I$. The thick lines mark
the path of the agent $\gamma_t(x)$.} \label{Interchange}
\end{figure}
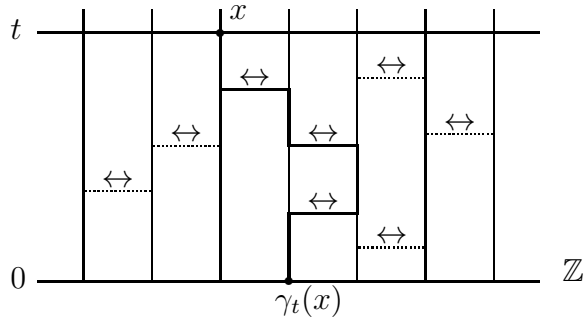
On the other hand, for each $t \ge 0$ and $x \in \Z$ there is a unique $y$ in $\Z$ such that $\gamma_t(y) = x$;
denote by
\be{inversegamma} \gamma^{-1} =
(\gamma^{-1}_t)_{t \ge 0}\ee
the process such that $\gamma^{-1}_t(x) = y$.

We interpret these processes by saying that there are \emph{agents} on the lattice,
named after their initial positions, who move around by exchanging places with their
neighbors at events of $I$. Then $\gamma^{-1}_t(x)$ is the position at time $t$ of
agent $x$ and $\gamma_t(x)$ is the agent who at time $t$ is at position $x$.

The SSEP $\xi=(\xi_t)_{t\geq 0}$ starting from a configuration
$\eta \in \Omega=\{0,1\}^\Z$ is obtained from $\gamma$ by
putting
\be{defSSE}
\xi_t(x) := \eta(\gamma_t(x)), \quad x \in \Z.
\ee
The description under the `agent interpretation' is that
we assign at time $0$ to each agent $x$ a state $\eta(x)$ and declare the state of
the exclusion process at a space time position $(x,t)$ to be the state of the
agent who is there.

We will call $\widetilde P$ the joint law of
$(N^+,N^-,\hN^+,\hN^-,I)$. For simplicity of notation, we
redefine $\P_{\mu}$ as the joint law of
$(N^+,N^-,\hN^+,\hN^-,I)$ and $\eta$ when the latter is distributed as $\mu$,
 i.e., $\P_{\mu} = \mu \times \widetilde{P}$.
 Then $\xi$ as defined in \eqref{defSSE} is under $\P_{\mu}$ indeed distributed
 as a SSEP started from $\mu$.


\subsection{Marked agents set}
\label{subsec:markedagents}
In our proof, regeneration arises as a consequence of the fact that, even though the environment
is slowly mixing, the environment \emph{perceived} by the walker is fast mixing in some sense.
The idea is that, since $X$ has a strong drift and the information spread is limited,
the dependence on the observed environment is left behind very fast. In the exclusion process,
this dependence is carried by the agents of the interchange process whom the RW $X$ meets as it moves;
we will therefore keep track of them via the following time-increasing set of \emph{marked agents}:
\be{MarkedSet}
A_t := \bigcup_{\substack{0 < s\leq t \\ \hN_{s-}\neq \hN_{s} }}\left\{\gamma_s(X_{s-})\right\}.
\ee
In words, $A_t$ consists of all the agents $x \in \Z$ whose states the walker observes up to time $t$. 
Set also
\be{supMarkedSet}
R_t := \sup_{x \in A_t} \gamma^{-1}_t(x),
\ee
i.e., $R_t$ is the position of the rightmost marked agent at time $t$. 
As usual we take $\sup \emptyset = - \infty$.

An important observation is that the walker depends on the initial configuration only through the states of the agents in $A_t$.
More precisely, $X$ is adapted to the filtration $\mathcal{G} = (\mathcal{G}_t)_{t \ge 0}$ given by
\be{filtrationG}
\mathcal{G}_t := \sigma((N_s^{\pm},\hN_s^{\pm},I_s)_{0 \le s \le t}, A_t, (\eta(x))_{x \in A_t}).
\ee
Moreover, by the i.i.d. structure and exchangeability of $\nu_\rho$,
the states of the agents who are not in $A_t$ have still, given $\mathcal{G}_t$, distribution $\nu_\rho$.
This is the content of the following lemma.

\bl{FreshBernoulli} For any $t \ge 0$ and $x_1, \ldots x_n \in \Z$,
\be{FreshBer}\E_{\nu_\rho}\left[\prod_{i=1}^{n}\xi_t(x_i) \mid
\mathcal{G}_t \right] = \rho^n \; \text{ a.s. on }
\{\gamma_t(x_1),...,\gamma_t(x_n) \notin A_t\},
\ee
i.e., the SSEP at time $t$ and off $\gamma^{-1}_t(A_t)$
 is, given $\mathcal{G}_t$, distributed according to $\nu_\rho$.
Moreover, \eqref{FreshBer} is still valid when $t$ is replaced with
a finite $\mathcal{G}$-stopping time.
\el
\begin{proof} From the definition of $A_t$ it follows that,
for $A \subset \Z$, \be{fb1}
\{A_t = A\} \in \sigma((N_s^{\pm},\hN_s^{\pm},I_s)_{0 \le s \le t},
(\eta(x))_{x \in A}). \ee With \eqref{fb1} we can verify by
summing over $A$ that, for any $x_1,...,x_n \in \Z$,
\be{fb2}
\E_{\nu_{\rho}}\left[\prod_{i=1}^{n}\eta(x_i) \mid
\mathcal{G}_t \right] = \rho^n \; \text{ a.s. on the set }
\{x_1,...,x_n \notin A_t\}. \ee
The summation is justified because $A_t$ is, for each $t$, a finite set.
Since $\gamma$ is $\mathcal{G}$-adapted and $\xi_t(x) =
\eta(\gamma_t(x))$, \eqref{FreshBer} follows.
The extension to a $\mathcal{G}$-stopping time is done by
approximating it from above by stopping times taking
values in a countable set (to which \eqref{FreshBer} easily extends)
and then using the right-continuity of $A_t$ and $\xi_t$.
\end{proof}

\section{Regeneration}
\label{sec:regeneration}
In this section we will develop a regenerative structure for the path of the RW $X$.
Let us first give an informal description of the regeneration strategy. Since $X$ is travelling
fast to the right, there will be moments, called \emph{trial times},
when the RW has left behind all agents previously met. At these times, it may `try to regenerate',
and we say that it succeeds if afterwards it never meets those agents again. In case it does not succeed,
we wait for the moment when it meets an agent from the past, which we call a \emph{failure time},
and repeat the procedure by waiting for the next trial time. Summarizing, the regeneration strategy consists of two steps:
waiting for a trial time when there is a chance for the walker to forget its past,
and then checking whether it succeeds or fails in its regeneration attempt. These steps are repeated
until the walker succeeds, which will eventually happen by the strong drift assumption \eqref{drift2}.

We proceed to formalize the regeneration scheme, beginning with the trial times.
Let $(T_t)_{t \ge 0}$ be the family of $\mathcal{G}$-stopping times defined by:
\be{Regeneration}
T_t:=
\inf\Big\{s \geq J_t
\colon\, X_s> R_s \Big\}.
\ee
where $J_t:=\inf\{s\geq t: N_t\neq N_s\}$ is the time of the next possible jump after time $t$.
The previous discussion justifies calling $T_t$ the first \emph{trial time} after time $t$.
From the definition it is clear that they are indeed $\mathcal{G}$-stopping times.
Note that, a.s., $T_t > t$.

In order to define the failure times, first let, for $t \ge 0, x\in\Z$,
\be{PRP} Y^t(x)=(Y^t_s(x))_{s \ge t} \ee
be the path starting at time $t$ from $x$ and jumping
to the right across the arrows of the process $I$ in
\eqref{Poisson}; see Figure \ref{RightPoisson2}.
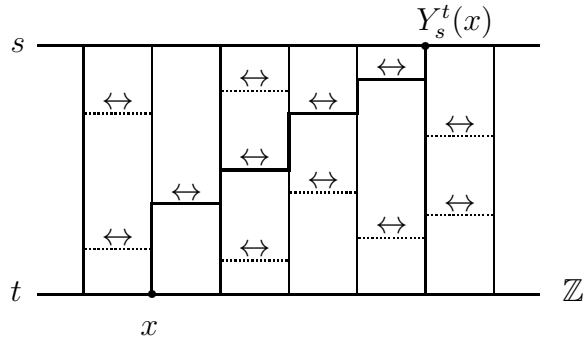
\begin{figure}[hbtp]
\vspace{1cm}
\begin{center}
\setlength{\unitlength}{0.3cm}
\begin{picture}(20,10)(0,0)

\put(0,0){\line(1,0){22}}
\put(0,11){\line(1,0){22}}

\put(2,0){\line(0,1){11}}
\put(5,0){\line(0,1){11}}
\put(8,0){\line(0,1){11}}
\put(11,0){\line(0,1){11}}
\put(14,0){\line(0,1){11}}
\put(17,0){\line(0,1){11}}
\put(20,0){\line(0,1){11}}

\qbezier[15](2.1,2)(3.5,2)(4.9,2)
\qbezier[15](2.1,8)(3.5,8)(4.9,8)
\qbezier[15](8.1,1.5)(9.5,1.5)(10.9,1.5)
\qbezier[15](8.1,9)(9.5,9)(10.9,9)
\qbezier[15](11.1,4.5)(12.5,4.5)(13.9,4.5)
\qbezier[15](14.1,2.5)(15.5,2.5)(16.9,2.5)
\qbezier[15](17.1,3.5)(18.5,3.5)(19.9,3.5)
\qbezier[15](17.1,3.5)(18.5,3.5)(19.9,3.5)
\qbezier[15](17.1,7)(18.5,7)(19.9,7)

\put(2.8,2.2) {$\leftrightarrow$}
\put(2.8,8.2) {$\leftrightarrow$}
\put(5.8,4.2) {$\leftrightarrow$}
\put(8.8,1.7) {$\leftrightarrow$}
\put(8.8,5.7) {$\leftrightarrow$}
\put(8.8,9.2) {$\leftrightarrow$}
\put(11.8,4.7) {$\leftrightarrow$}
\put(11.8,8.2) {$\leftrightarrow$}
\put(14.8,2.7){$\leftrightarrow$}
\put(14.8,9.7){$\leftrightarrow$}
\put(17.8,3.7) {$\leftrightarrow$}
\put(17.8,7.2) {$\leftrightarrow$}

{\thicklines
\qbezier(5,0)(5,4)(5,4)
\qbezier(5,4)(8,4)(8,4)
\qbezier(8,4)(8,5.5)(8,5.5)
\qbezier(8,5.5)(11,5.5)(11,5.5)
\qbezier(11,5.5)(11,8)(11,8)
\qbezier(11,8)(14,8)(14,8)
\qbezier(14,8)(14,9)(14,9.5)
\qbezier(14,9.5)(17,9.5)(17,9.5)
\qbezier(17,9.5)(17,11)(17,11)}

\put(-1.2,-.3){$t$}\put(-1.2,10.7){$s$} 
\put(4.5,-1.8){$x$} \put(16.6,11.7){$Y^t_s(x)$}
\put(5,0){\circle*{.35}} 
\put(17,11){\circle*{.35}} \put(23,-.3){$\Z$}
\end{picture}
\end{center}
\caption{\small As in Figure \ref{Interchange}, the dotted lines
are events of $I$. The path $Y^t(x)$ starts at
$x$ and goes upwards in time and to the right across the arrows.} \label{RightPoisson2}
\end{figure}
Then $(Y^t_{t+u}(x) - x)_{u \ge 0}$ is a Poisson process with rate $1$.

Now let $(F_t)_{t \ge 0}$ be the family of $\mathcal{G}$-stopping times defined by
\be{defFt}
F_t := \inf\{s > t \colon X_s \le Y^t_s(X_t-1)\}. \ee
As usual we take $\inf \emptyset = \infty$.
We call $F_t$ the first \emph{failure time} after time $t$.
The $F_t$'s are smaller than the failure times informally discussed in the beginning of the section.
Indeed, agents to the left of $X_t$ at time $t$ can never cross $Y^t(X_t-1)$,
as can be seen on the graphical representation. In particular, if
$F_t = \infty$, then $X$ will after time $t$ never meet such agents again.

In the following lemma we obtain exponential moment bounds for the trial times $T_t$, showing in particular that they are a.s. finite.

\bl{momTt} For every $a > 0$, there exists $b_1 \in (0,\infty)$ such that, for all $t \ge 0$,
\be{eq:momTt}
\E_{\nu_\rho}[e^{b_1 (T_t-t)} | \mathcal{G}_t ] \le \left(1+a\right)e^{a (R_t-X_t)^+} \quad \P_{\nu_\rho}\text{-a.s.}
\ee
\el
\begin{proof}
Let \be{Y}\widetilde{Y}^t=Y^t(R_t \vee X_t)\ee
be the Poisson path starting at time $t$ from the position $R_t \vee X_t$.

Define $H_t := \inf\{s > t \colon\, M_s-M_t + X_t > \widetilde{Y}^t_s\}$. Let us check that
\be{CrossTime}
T_t\leq H_t \vee J_t. \ee
Indeed, if $X_{J_t} > \widetilde{Y}^t_{J_t}$ (which can happen only if $R_t \le X_t$), then $T_t = J_t$.
Suppose now that $X_{J_t} \le \widetilde{Y}^t_{J_t}$.
Recall the definition of $\gamma^{-1}$ in \eqref{inversegamma}. 
By geometrical constraints, if $\gamma^{-1}_s(x) \le \widetilde{Y}^t_s$ for some $s
\ge t$, then this will also hold for all future times.
In particular, agents marked by $X$ before it crosses $\widetilde{Y}^t$
will never be able to cross $\widetilde{Y}^t$ themselves. This implies that $T_t$ is smaller
than the first time after $t$ when $X$ is to the right of $\widetilde{Y}^t$, which is
in turn smaller than $H_t$ by \eqref{MStocDom}.

Since the minimal walker $M$ is
independent of $I$, $\left(M_{t+u}-M_t - (\widetilde{Y}^t_{t+u} - R_t \vee X_t) \right)_{u \ge 0}$
is under $\P_{\nu\rho}(\cdot | \mathcal{G}_t)$ a continuous-time RW starting from $0$ that
jumps to the right at rate $\alpha_0\wedge\alpha_1$ and to the left at rate
$\beta_0\vee\beta_1+1$. By \eqref{drift2}, this RW is transient to the right with speed $\alpha_0\wedge\alpha_1-\beta_0\vee\beta_1-1>0$.
Furthermore, $H_t-t$ is the first time when it hits $(R_t - X_t)^+ + 1$.
Now, if $\mathcal{T}_x$ is the first time when a continuous-time RW with drift $d > 0$ hits a site $x>0$,
then $\sup_{x \ge 1}\left(\mathcal{T}_x- 2x/d \right)^+$ has an exponential moment, which can be taken arbitrarily close to $1$. 
Therefore, by \eqref{CrossTime}, \eqref{eq:momTt} holds for $b_1$ sufficiently small.
\end{proof}

For $t \ge 0$, denote by $X^{(t)}$ the increments of the walk after time $t$, that is,
\be{defXt}
X^{(t)}_u := X_{t+u}-X_t.
\ee

The next lemma shows that the second step of the regeneration strategy indeed works.

\bl{condgamma}
For each $t \ge 0$,
\be{eqcondgamma}
\P_{\nu_\rho}\left(F_t = \infty, X^{(t)} \in \cdot \mid \mathcal{G}_t\right) = \P_{\nu_\rho}\left( \Gamma, X \in \cdot \right) \text{ a.s. on } \{R_t < X_t\},
\ee
where $\Gamma := \{F_0 = \infty\}$.
\el
\begin{proof}
First note that
\be{indepleftongamma}
\eta \mapsto \P_{\eta}\left(\Gamma, X \in \cdot \right) \text{ does not depend on } (\eta(x))_{x < 0}.
\ee
This can be verified using the graphical representation.
Indeed, the agents $x < 0$ can never cross $Y^{0}(-1)$. Therefore, on $\Gamma$, none of them ever meets $X$, i.e., $A_t \cap \left(\Z \setminus \N_0 \right) = \emptyset$ for all $t$. On the other hand, $\Gamma$ is itself measurable in $\sigma(X,I)$; since $X$ is adapted to $\mathcal{G}$, \eqref{indepleftongamma} follows.

Now, letting $\bar{\xi}_t(\cdot) := \xi_t(X_t + \cdot)$, we can write
\be{compcondgamma}
\begin{array}{lcl}
\P_{\nu_\rho}\left( R_t < X_t, F_t = \infty, X^{(t)} \in \cdot \mid \mathcal{G}_t \right) & = & \E_{\nu_\rho} \left[\mathbbm{1}_{\{R_t < X_t\}} \P_{\bar{\xi}_t} \left(\Gamma, X \in \cdot \right) \mid \mathcal{G}_t \right] \\
& = & \mathbbm{1}_{\{R_t < X_t\}} \P_{\nu_\rho}\left( \Gamma, X \in \cdot \right),\\
\end{array}
\ee
where the first equality holds by the Markov property and translation-invariance of the graphical representation
and the second is justified since, by \eqref{indepleftongamma}, $\P_{\bar{\xi}_t} \left(\Gamma, X \in \cdot \right)$ is a function only of
$(\bar{\xi}_t(x))_{x \ge 0}$, whose distribution under $\P_{\nu_\rho}(\cdot | \mathcal{G}_t)$ is, by Lemma \ref{FreshBernoulli}, a.s. equal to $\nu_\rho$ when $R_t < X_t$.
\end{proof}

Before proceeding we make a simple but nonetheless important remark:
\begin{remark}\label{remarkstoptimes} \emph{Replacing $t$ in $T_t$ and $F_t$ with a finite $\mathcal{G}$-stopping time still yields a stopping time, and Lemmas \ref{momTt}--\ref{condgamma} (as well as Lemmas \ref{momFt} and \ref{momvarrho} below) remain true with a finite stopping time in place of $t$.}
\end{remark}
\vspace{-0.2cm}
Remark \ref{remarkstoptimes} is justified by right-continuity as in the proof of Lemma \ref{FreshBernoulli}. Recall also that a stopping time multiplied by the indicator function of the set where it is finite is again a stopping time.

\vspace{0.1cm}
We are now in shape to prove our main result.

\bt{FiniteFullReg} There exists a $\P_{\nu_{\rho}}$-a.s.\ positive and finite random time $\tau$ such that, $\P_{\nu_{\rho}}$-a.s.,
\begin{align}
&\P_{\nu_{\rho}}\Big( \left( X_{\tau+s}-X_\tau\right)_{s \ge 0} \in \cdot \,\Big|\, \tau, (X_s)_{s \le \tau} \Big) = \P_{\nu_{\rho}}\Big( X \in \cdot \,\Big|\, \Gamma \Big);   \label{proptau1}\\
 &\P_{\nu_{\rho}}\Big( \left( X_{\tau+s}-X_\tau\right)_{s \ge 0} \in \cdot \,\Big|\, \Gamma, \tau, (X_s)_{s \le \tau} \Big) = \P_{\nu_{\rho}}\Big( X \in \cdot \,\Big|\, \Gamma \Big) . \label{proptau2}
\end{align}
\et
\begin{proof}
We will obtain the regeneration time $\tau$ with the help of
an increasing sequence $(U_n)_{n \in \N_0}$ of
$\mathcal{G}$-stopping times in $[0,\infty]$,
which will be defined using $T_t$ and $F_t$.
We will throughout the proof tacitly use Remark \ref{remarkstoptimes}.

Set $U_0 := 0$. Supposing that for some $n \ge 0$, $(U_k)_{k \le 2n}$
are all defined, let
\be{defU}
\begin{array}{lcl}
U_{2n+1} & := & \left\{ \begin{array}{ll}
                   \infty & \;\;\; \text{if } U_{2n}=\infty \\
                   T_{U_{2n}} & \;\;\; \text{otherwise,} \\
                   \end{array}\right.\\
U_{2(n+1)} & := & \left\{ \begin{array}{ll}
                   \infty & \text{if } U_{2n+1}=\infty \\
                   F_{U_{2n+1}} & \text{otherwise.} \\
                   \end{array}\right.\\
\end{array}
\ee
Then $(U_n)_{n \in \N_0}$ is an increasing sequence of $\mathcal{G}$-stopping times.
Now define
\be{defK}
K = \inf\{n \in \N_0 \colon U_{2n+1} < \infty, F_{U_{2n+1}} = \infty \} \in [0,\infty],
\ee
i.e., $2K+1$ is the first index before the sequence $U$ hits infinity.

Set $\kappa := \P_{\nu_\rho}(\Gamma)$. Then $\kappa > 0$ since $X$ dominates $M$ and $M-Y^0(-1)$ has a positive drift.
By Lemma \ref{condgamma},
\be{tailK}
\P_{\nu_{\rho}}\left(K \ge n\right) = (1-\kappa)^{n} \;\; \forall \; n \in \N_0.
\ee
In particular, $K < \infty$ $\; \P_{\nu_\rho}$-a.s. and we can define
\be{deftau}
\tau := U_{2K+1} < \infty \quad \P_{\nu_{\rho}} \text{-a.s.}
\ee

Since $\P_{\nu_\rho}(\cdot | \Gamma) \ll \P_{\nu_\rho}$,
$\tau$ is a.s. well-defined and finite also under $\P_{\nu_\rho}(\cdot | \Gamma)$.

We will now proceed to verify \eqref{proptau1}.
Define $\mathcal{G}_{\tau}$ as the sigma-algebra of the events $B$ such that, for all $n \in \N_0$,
there exist $B_n \in \mathcal{G}_{U_{2n+1}}$ such that $B \cap \{K=n\} = B_n \cap \{K=n\}$.
Note that $\tau$ and $(X_s)_{s \le \tau}$ are measurable in $\mathcal{G}_{\tau}$.

Take $f \ge 0$ measurable, $B \in \mathcal{G}_{\tau}$, and write
\begin{align*}
&\E_{\nu_{\rho}}\left[\mathbbm{1}_{B} f(X^{(\tau)}) \right] = \sum_{n=0}^{\infty} \E_{\nu_{\rho}}\left[\mathbbm{1}_{B_n} \mathbbm{1}_{\{K=n\}} f(X^{(U_{2n+1})}) \right] \\
 &= \sum_{n=0}^{\infty} \E_{\nu_{\rho}}\left[\mathbbm{1}_{B_n}\mathbbm{1}_{\{U_{2n+1} < \infty, F_{U_{2n+1}}  = \infty\}} f(X^{(U_{2n+1})}) \right] \\
 &= \sum_{n=0}^{\infty} \E_{\nu_{\rho}}\left[\mathbbm{1}_{B_n} \mathbbm{1}_{\{U_{2n+1} < \infty\}} \E_{\nu_{\rho}}\left[\mathbbm{1}_{\{F_{U_{2n+1}} = \infty\}} f(X^{(U_{2n+1})}) \,\middle|\, \mathcal{G}_{U_{2n+1}}\right] \right].
\end{align*}
When $U_{2n+1} < \infty$, $R_{U_{2n+1}} < X_{U_{2n+1}}$ so, by Lemma \ref{condgamma}, the last line equals
\begin{align*}
 & \E_{\nu_{\rho}}\left[ f(X) \mathbbm{1}_{\Gamma} \right] \sum_{n=0}^{\infty} \E_{\nu_{\rho}}\left[\mathbbm{1}_{B_n} \mathbbm{1}_{\{U_{2n+1}<\infty\}} \right] \\
 &= \E_{\nu_{\rho}}\left[ f(X) \mid \Gamma \right] \sum_{n=0}^{\infty} \E_{\nu_{\rho}}\left[\mathbbm{1}_{B_n} \mathbbm{1}_{\{U_{2n+1}<\infty\}} \right] \P_{\nu_\rho}(\Gamma)
\end{align*}
which, by Lemma \ref{condgamma} again, is equal to
\begin{align}
 & \E_{\nu_{\rho}}\left[ f(X) \,\middle|\, \Gamma \right] \sum_{n=0}^{\infty} \E_{\nu_{\rho}}\left[\mathbbm{1}_{B_n} \mathbbm{1}_{\{U_{2n+1}<\infty\}} \P_{\nu_{\rho}}\left(F_{U_{2n+1}} = \infty \,\middle|\, \mathcal{G}_{U_{2n+1}}\right) \right] \nonumber\\
 &= \E_{\nu_{\rho}}\left[ f(X) \mid \Gamma \right] \sum_{n=0}^{\infty}\P_{\nu_{\rho}}\left(B_n, K=n \right) \nonumber\\
 &= \E_{\nu_{\rho}}\left[ f(X) \mid \Gamma \right] \P_{\nu_{\rho}}(B). \label{compproptau1}
\end{align}
This proves \eqref{proptau1}. 
To finish the proof, note that $\Gamma \in \mathcal{G}_{\tau}$ since, for any $t \ge 0$,
\be{gammainGtau}
\Gamma \cap \{F_t = \infty \}= \{X_s > Y^0_s(-1) \; \forall \; s \le t\} \cap \{F_t = \infty\}.
\ee
So \eqref{proptau2} follows by applying \eqref{compproptau1} to $B \cap \Gamma$ in place of $B$.
\end{proof}

In Proposition \ref{momtau} below, we will show that $\tau$ and $X_{\tau}$
have exponential moments. For its proof, we will need the following two lemmas. 
\bl{momFt}
For all $\epsilon > 0$, there exists $a_1 \in (0,\infty)$ such that, for all $t \ge 0$,
\be{eq:momFt}
\E_{\nu_\rho}\left[ \mathbbm{1}_{\{F_t < \infty\}} e^{a_1 (F_t - t)} \mid \mathcal{G}_t \right] \le 1+\epsilon \qquad \P_{\nu_\rho} \text{-a.s.}
\ee
\el
\begin{proof}
Let
\be{lasttimeleftY}
D_t := \sup \{s > t ; M_s - M_t + X_t \le Y^t_s(X_t-1)\}.
\ee
If $F_t < \infty$, then $F_t \le D_t$ because, when finite,
$F_t$ is smaller than the last time $s>t$ when $X_s \le Y^t_s(X_t-1)$, which is in turn
smaller than $D_t$ by \eqref{MStocDom}.
On the other hand, $(M_{t+u} - M_t + X_t - Y^t_{t+u}(X_t-1))_{u \ge 0}$ is under $\P_{\nu_\rho}(\cdot | \mathcal{G}_t)$ a continuous-time RW with positive drift starting at $1$. Since $D_t-t$ is the last time
when this random walk is less or equal to $0$, \eqref{eq:momFt} follows.
\end{proof}

\bl{momvarrho} For all $\epsilon>0$, there exists $a_2 \in (0,\infty)$
such that, for all $t \ge 0$, \be{eq:momvarrho}
\E_{\nu_\rho}\left[\mathbbm{1}_{\{F_t < \infty\}}
e^{a_2(R_{F_t}-X_{F_t})^+} \mid \mathcal{G}_t\right] \le 1+\epsilon
\quad \P_{\nu_\rho} \text{-a.s. on } \{R_t < X_t \}. \ee
\el
\begin{proof}
Take $D_t$ as in \eqref{lasttimeleftY} and recall that, when finite, $F_t \le D_t$. 
Let $\chi_t := X_t + N_{D_t}-N_t$ and consider $Y^t(\chi_t)$ (see \eqref{PRP}).
If $R_t < X_t$, then $R_{F_t} \le Y^t_{F_t}(\chi_t)$
and so 
\be{boundvarrho} R_{F_t} -
X_{F_t} \le Y^t_{D_t}(\chi_t)-\chi_t + N_{D_t} -N_t + 1. 
\ee 
Now \eqref{eq:momvarrho} follows by noting that, even though $\chi_t$ is not in $\mathcal{G}_t$, it is independent of $(Y^t_{t+u}(\chi_t)-\chi_t)_{u \ge 0}$ (as they depend on disjoint regions of the graphical representation), so that the latter is still a Poisson process under $\P_{\nu_\rho}(\cdot | \mathcal{G}_t)$.
\end{proof}

\bp{momtau} There exists $b \in (0,\infty)$ such that
\be{eq:momtau} \E_{\nu_\rho}[e^{b \tau}] , \; \E_{\nu_\rho}[e^{b
N_{\tau}}] < \infty, \ee the same being true under
$\P_{\nu_\rho}(\cdot | \Gamma)$. \ep
\begin{proof}
The last sentence follows from \eqref{eq:momtau} and $\kappa =
\P_{\nu_\rho}(\Gamma) > 0$. Since $N$ is a Poisson process, it is
enough prove to that $\tau$ has exponential moments under
$\P_{\nu_\rho}$. To this end, let $\epsilon > 0$ such that
$(1+\epsilon)^2(1-\kappa) < 1$. Take $a \in (0,\epsilon)$ such
that, for all $t \ge 0$, \be{midestimate}
\E_{\nu_{\rho}}\left[\mathbbm{1}_{\{F_t < \infty\}}
e^{a(F_t-t)+a(R_{F_t}-X_{F_t})^+} \mid \mathcal{G}_t \right]
\le 1+\epsilon \quad \P_{\nu_\rho} \text{-a.s. on } \{R_t
< X_t\}. \ee Such $a$ exists by Lemmas \ref{momFt} and
\ref{momvarrho} and an application of H\"older's inequality.
For this $a$, take $b_1$ as in Lemma \ref{momTt} and let $b := (a \wedge b_1)/2$.
Now fix $n\ge1$ and estimate, recalling that $R_{U_{2n-1}} < X_{U_{2n-1}}$ when
$U_{2n-1} < \infty$,
\begin{align}\label{est1}
\E_{\nu_\rho} \left[ \mathbbm{1}_{\{U_{2n} < \infty\}}
e^{2bU_{2n+1}} \right]  = 
\E_{\nu_\rho} \left[ \mathbbm{1}_{\{U_{2n} < \infty\}} e^{2bU_{2n}}\E_{\nu_{\rho}}\left[ e^{2b(T_{U_{2n}}-U_{2n})} \mid \mathcal{G}_{U_{2n}} \right] \right]\hspace{3cm} \nonumber \\
 \le (1+a) \E_{\nu_\rho} \left[ \mathbbm{1}_{\{U_{2n} < \infty\}} e^{2bU_{2n}+a(R_{U_{2n}}-X_{U_{2n}})^+} \right] \hspace{3.35cm} \nonumber \\
  = (1+a) \E_{\nu_\rho} \Bigg\{ \mathbbm{1}_{\{U_{2n-2} < \infty\}}e^{2bU_{2n-1}} \; \hspace{8.5cm} \nonumber \\
        \times \; \E_{\nu_{\rho}} \left[\mathbbm{1}_{\{F_{U_{2n-1}} < \infty\}} e^{2b(F_{U_{2n-1}}-U_{2n-1})+a\left(R_{F_{U_{2n-1}}}-X_{F_{U_{2n-1}}}\right)^+} 
          \mid \mathcal{G}_{U_{2n-1}}\right] \Bigg\} \nonumber \\
 \le (1+\epsilon)^2 \E_{\nu_\rho} \left[ \mathbbm{1}_{\{U_{2(n-1)}
< \infty\}} e^{2bU_{2(n-1)+1}} \right]. \hspace{4.2cm}
\end{align}
By induction, we get \be{est2} \E_{\nu_\rho} \left[
\mathbbm{1}_{\{U_{2n} < \infty\}} e^{2bU_{2n+1}} \right] \le
(1+\epsilon)^{2n+1}. \ee 

To conclude, use H\"older's
inequality and \eqref{tailK} to write:
\begin{equation}\label{est3}
\begin{array}{ll}
\vspace{0.2cm}
\E_{\nu_\rho}\left[e^{b\tau} \right] & = \sum_{n=0}^{\infty} 
\E_{\nu_\rho}\left[\mathbbm{1}_{\{K=n\}}e^{bU_{2n+1}} \right] =
\sum_{n=0}^{\infty} \E_{\nu_\rho}\left[\mathbbm{1}_{\{K=n\}}\mathbbm{1}_{\{U_{2n}< \infty\}}e^{bU_{2n+1}} \right] \\
\vspace{0.2cm}
& \le \sum_{n=0}^{\infty} \P_{\nu_\rho}\left(K=n\right)^{\frac{1}{2}}
\E_{\nu_\rho}\left[\mathbbm{1}_{\{U_{2n}< \infty\}}e^{2b U_{2n+1}} \right]^{\frac{1}{2}} \\
& \le \sqrt{1+\epsilon} \sum_{n=0}^{\infty} \left(\sqrt{(1-\kappa)(1+\epsilon)^2}\right)^n < \infty.
\end{array}
\end{equation}
\end{proof}

Finally, due to Theorem \ref{FiniteFullReg}, we can construct a sequence of i.i.d. regeneration
times.

\bt{regtimes} By enlarging the probability space, one can assume the existence of a sequence $(\tau_n)_{n \in \N}$ of
random times with $\tau_1 := \tau$ and such that, setting $S_n:=\sum_{i=1}^n\tau_i$,
\be{iidsequence}
\Big(\tau_{n+1},\left(X^{(S_n)}_s\right)_{0 \le s \le
\tau_{n+1}}\Big)_{n \in \N}
\ee 
is under $\P_{\nu_\rho}$ an i.i.d. sequence which is independent from $(\tau,(X_s)_{0 \le s \le \tau})$, each of its terms being distributed as $(\tau,(X_s)_{0 \le s \le \tau})$ under $\P_{\nu_\rho}(\cdot \vert \Gamma)$.
\et
\begin{proof}
A version of $X$ with the claimed properties can be constructed on
a product space using Theorem \ref{FiniteFullReg}, as is standard
for ``delayed regenerative processes'' (see e.g. \cite{SiThWo94}).
This version can be assumed to be the one constructed in Section
\ref{subsec:minwalk} again by a standard coupling argument.
\end{proof}

\newpage
\section{Limit Theorems}
\label{sec:limthms} As a fruit of the regenerative structure constructed in Section~\ref{sec:regeneration}, we now obtain the asymptotic results stated in Section \ref{subsec:results}.

\subsection{Proofs of Theorems \ref{LLNforEP} --- \ref{CLTforEP}}
\label{subsec:LLN}

Let us collect some useful facts.
First of all, by Theorem \ref{regtimes}, Proposition \ref{momtau} and \eqref{dombym}, 
\be{sups}
\left(\sup_{s \in [0, \tau_{n+1}]} \left|X_s^{(S_n)} \right| \right)_{n \in \N_0} \text{ have a uniform exponential moment.}
\ee
Furthermore, again by Theorem \ref{regtimes} and Proposition \ref{momtau},
\be{LLN2}\lim_{n\to\infty} \frac{S_n}{n} = \E_{\nu_{\rho}}[\tau \vert \Gamma] \quad\text{ and }
\quad\lim_{n\to\infty} \frac{X_{S_n}}{n} = \E_{\nu_\rho}[X_{\tau} \vert \Gamma] \quad \P_{\nu_\rho} \text{-a.s.}\ee
For $t\geq0$, let $k_t$ be the random integer such that
\be{SubSequence} S_{k_t}\leq t < S_{k_t+1}.\ee 
Then a.s. $\lim_{t \to \infty}t^{-1}k_t=\E_{\nu_{\rho}}[\tau \vert \Gamma]^{-1}$.
Thus the candidate velocity for $X$ is
\be{defspeed} v := \frac{\E_{\nu_\rho}[X_{\tau} \vert \Gamma]}{\E_{\nu_{\rho}}[\tau \vert \Gamma]}. \ee

\begin{proof}[Proof of Theorems \ref{LLNforEP} and \ref{LDforEP}]
We first prove \eqref{eq:expconc}.
From Theorem \ref{regtimes} and Proposition \ref{momtau} we obtain LDP's 
for both $S_n$ and $X_{S_n}$ with
 rate functions which are only zero at $\E_{\nu_{\rho}}[\tau \vert \Gamma]$ and $\E_{\nu_\rho}[X_{\tau} \vert \Gamma]$, respectively. 
Since $k_t$ is the inverse of $S_n$, it also
satisfies a LDP with a rate function which is
zero only at $\E_{\nu_{\rho}}[\tau \vert \Gamma]^{-1}$ (see Glynn-Whitt \cite{GlWh94}). 
Fix $\epsilon > 0$. From the LDP's for $X_{S_n}$ and $k_t$, we get exponential decay 
of $\P_{\nu_\rho}\left( |t^{-1}X_{S_{k_t}} - v| \ge \epsilon\right)$, while the same is obtained 
for $\P_{\nu_\rho}\left( |X_t-X_{S_{k_t}}| \ge \epsilon t \right)$ from \eqref{sups} and the LDP for $k_t$.
From this, \eqref{eq:expconc} is readily obtained, and the LLN follows by the Borel-Cantelli lemma.  By \eqref{MStocDom}, $v \ge \alpha_0\wedge \alpha_1 - \beta_0 \vee \beta_1 > 1$. Convergence in $L^p$ follows from \eqref{Lpbound}.
\end{proof}

\begin{proof}[Proof of Theorem \ref{CLTforEP}]
Let $\hat{\sigma}^2$ be the variance of $X_{\tau}$ under
$\P_{\nu_\rho}(\cdot | \Gamma)$ which is finite due to \eqref{eq:momtau} and positive since $X_{\tau}$ is not a.s. constant. For the process $(X_{S_k})_{k \in \N}$, a functional CLT with variance $\hat{\sigma}^2$ holds since, by Theorem \ref{regtimes} and \eqref{eq:momtau}, the assumptions of the Donsker-Prohorov invariance principle are satisfied. With a random time change argument as in Section 17 of \cite{Bi68}, we obtain for $(X_{S_{k_t}})_{t \ge 0}$ a functional CLT with variance $\sigma^2 = \hat\sigma^2 \E_{\nu_{\rho}}[\tau \vert \Gamma]^{-1}$. To extend it to $X$, note that
\be{supconv}
\lim_{n \to \infty}n^{-1/2}\sup_{t \le T} \left|X_{nt} - X_{S_{k_{nt}}}\right|
= 0 \quad \P_{\nu_\rho} \text{-a.s. for any } T > 0.
\ee
This follows from Theorem \ref{regtimes}, \eqref{sups} and the LDP for $k_t$ (mentioned in the previous proof), and implies that the Skorohod distance between diffusive rescalings of $X$ and $Z$ goes to zero almost surely as $n \to \infty$.
\end{proof}

\subsection{Einstein Relation: proof of Theorem \ref{ERforEP}}
\label{subsec:ER}

We first show how the speed $v$ is related to the observed density of particles, and that the latter approaches the  density of the environment as $\lambda \downarrow 0$.
\begin{proposition}\label{prop:density}
The limit
\begin{align}\label{eq:density}
\hat\rho(\lambda) &= \lim_{t\to\infty} \frac{1}{t} \int_0^t\E_{\nu_\rho} \left[ \xi_s(X_s) \right ]\di s
\end{align}
exists and satisfies
\begin{align} \label{eq:speeddensity} 
&v(\lambda) = \left[\alpha_1(\lambda)-\beta_1(\lambda) \right]\hat\rho(\lambda) + \left[\alpha_0(\lambda)-\beta_0(\lambda)\right]\left[1-\hat\rho(\lambda)\right],\\
&\lim_{\gamma \downarrow 0}\hat\rho(\lambda) = \rho.  \label{eq:density-perturbation}
\end{align}
\end{proposition}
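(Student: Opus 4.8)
The plan is to extract $\hat\rho(\lambda)$ from a martingale identity together with the regenerative structure of Section~\ref{sec:regeneration}, and then to estimate $\hat\rho(\lambda)-\rho$ by means of Lemma~\ref{FreshBernoulli} and a comparison with the minimal walker.

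\emph{Martingale identity and the relation \eqref{eq:speeddensity}.} First I would observe that, for fixed $\xi$, the generator of $X$ applied to $x\mapsto x$ equals the local drift $[\alpha_1-\beta_1]\xi_s(X_s)+[\alpha_0-\beta_0][1-\xi_s(X_s)]$, so that $X_t-\int_0^t\big([\alpha_1-\beta_1]\xi_s(X_s)+[\alpha_0-\beta_0][1-\xi_s(X_s)]\big)\,\di s$ is a quenched, hence annealed, martingale; as $\E_{\nu_\rho}[|X_t|]<\infty$ by \eqref{Lpbound}, Fubini yields
\[
\E_{\nu_\rho}[X_t]=[\alpha_0-\beta_0]\,t+\big([\alpha_1-\beta_1]-[\alpha_0-\beta_0]\big)\int_0^t\E_{\nu_\rho}[\xi_s(X_s)]\,\di s .
\]
Dividing by $t$ and letting $t\to\infty$, using the $L^1$-convergence $t^{-1}\E_{\nu_\rho}[X_t]\to v(\lambda)$ from Theorem~\ref{LLNforEP}, then gives \eqref{eq:speeddensity} as soon as \eqref{eq:density} is established.

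\emph{Existence of the limit \eqref{eq:density}.} Here I would invoke the i.i.d.\ regenerative decomposition of Theorem~\ref{regtimes}. Setting $\Xi_n:=\int_{S_n}^{S_{n+1}}\xi_s(X_s)\,\di s$, an inspection of the proof of Theorem~\ref{FiniteFullReg} shows that the $\Xi_n$, $n\ge1$, can be appended to the i.i.d.\ block data: on $\Gamma$ the agents the walker meets after a regeneration time sit, at that time, to the right of it and therefore carry fresh $\nu_\rho$-states by Lemma~\ref{FreshBernoulli}, so $\Xi_n$ is a function of the block path, the block arrows of $I$, and fresh Bernoulli variables. Moreover $0\le\Xi_n\le\tau_{n+1}$ with $\E_{\nu_\rho}[\tau\mid\Gamma]<\infty$ by Proposition~\ref{momtau}. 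A renewal--reward argument then gives $t^{-1}\int_0^t\xi_s(X_s)\,\di s\to\E_{\nu_\rho}[\Xi_1\mid\Gamma]/\E_{\nu_\rho}[\tau\mid\Gamma]$ $\P_{\nu_\rho}$-a.s.; since this time-average lies in $[0,1]$, bounded convergence upgrades it to the convergence of $t^{-1}\int_0^t\E_{\nu_\rho}[\xi_s(X_s)]\,\di s$ to the same constant, which is then $\hat\rho(\lambda)$.

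\emph{The limit $\hat\rho(\lambda)\to\rho$ as $\lambda\downarrow0$.} Applying Lemma~\ref{FreshBernoulli} with $n=1$ and $x_1=X_s$ (note $X_s$ is $\mathcal{G}_s$-measurable), I get $\E_{\nu_\rho}[\xi_s(X_s)\mid\mathcal{G}_s]=\rho$ on $\{\gamma_s(X_s)\notin A_s\}$, while on the complement $\xi_s(X_s)$ is $\mathcal{G}_s$-measurable; hence $|\E_{\nu_\rho}[\xi_s(X_s)]-\rho|\le\P_{\nu_\rho}(\gamma_s(X_s)\in A_s)$. Integrating, and using that marked agents occupy distinct sites, $\int_0^t\P_{\nu_\rho}(\gamma_s(X_s)\in A_s)\,\di s$ is the expected total time the walker sits on a marked agent, which I would bound by $C\,\E_{\nu_\rho}[\hN_t]$: after the walker marks an agent $a$ at an $\hN$-ring time $\sigma$, the gap $X_s-\gamma^{-1}_s(a)$ for $s\ge\sigma$ dominates $-1$ plus a continuous-time random walk started at $0$ which, by \eqref{MStocDom} and the independence of the walker's Poisson clocks from $I$, has jump rates $\alpha_0\wedge\alpha_1+1$ to the right and $\beta_0\vee\beta_1+1$ to the left and thus drift at least $\alpha_0\wedge\alpha_1-\beta_0\vee\beta_1>1$; so the expected time the walker later spends on $a$ is at most a constant $C<\infty$, uniform for $\lambda$ in a neighbourhood of $0$, and summing over the $\hN_t$ marked agents proves the bound. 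Since by \eqref{Perturbation} the total rate of $\hN$ is $(\alpha_0\vee\alpha_1-\alpha_0\wedge\alpha_1)+(\beta_0\vee\beta_1-\beta_0\wedge\beta_1)=O(\lambda)$, one has $\E_{\nu_\rho}[\hN_t]=O(\lambda)\,t$, so
\[
|\hat\rho(\lambda)-\rho|\le\limsup_{t\to\infty}\frac1t\int_0^t|\E_{\nu_\rho}[\xi_s(X_s)]-\rho|\,\di s\le C\cdot O(\lambda)\longrightarrow0\quad(\lambda\downarrow0),
\]
which is the asserted limit.

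\emph{Main obstacle.} I expect the crux to be the last step: controlling, uniformly for small $\lambda$, the time the walker spends revisiting agents it has already observed. This relies on comparing the walker--agent gap with a fixed positive-drift random walk via \eqref{MStocDom} and the independence of the walker's clocks from the interchange arrows. A secondary technical point is justifying in the second step that the environment observed inside a regeneration block can be folded into the i.i.d.\ block data of Theorem~\ref{regtimes}.
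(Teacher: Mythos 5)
Your proof is correct and, for the two substantive claims, follows the paper's own route: \eqref{eq:speeddensity} is read off the quenched martingale $X_t-\int_0^t\big([\alpha_1-\beta_1]\xi_s(X_s)+[\alpha_0-\beta_0][1-\xi_s(X_s)]\big)\,\di s$ exactly as in the paper, and \eqref{eq:density-perturbation} is obtained by the same decomposition: Lemma \ref{FreshBernoulli} off the marked set, reduction to the expected occupation time $L_t$ of marked agents, a per-agent bound via a positive-drift comparison walk (you compare directly with the agent's rate-$2$ symmetric motion, the paper with the one-sided Poisson path $Y^{t_x}$; both give a constant uniform for small $\lambda$ under \eqref{drift2}), and finally $\E_{\nu_\rho}[|A_t|]\le\E_{\nu_\rho}[\hN_t]=O(\lambda)t$ from \eqref{Perturbation}.

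The one place you diverge is the existence of the limit \eqref{eq:density}. The paper gets it for free from the martingale identity: since $t^{-1}\E_{\nu_\rho}[X_t]\to v$ by Theorem \ref{LLNforEP} and $\E_{\nu_\rho}[X_t]$ is an affine function of $\int_0^t\E_{\nu_\rho}[\xi_s(X_s)]\,\di s$ with slope $(\alpha_1-\beta_1)-(\alpha_0-\beta_0)$, one simply solves for the time average (this tacitly assumes that slope is nonzero; when it vanishes \eqref{eq:speeddensity} is vacuous anyway). You instead run a renewal--reward argument on the blocks $\Xi_n=\int_{S_n}^{S_{n+1}}\xi_s(X_s)\,\di s$. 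That route is more robust (it does not care whether the two local drifts coincide), but it costs you something the paper's route avoids: Theorem \ref{regtimes} as stated only asserts the i.i.d.\ property of $\big(\tau_{n+1},(X^{(S_n)}_s)_{s\le\tau_{n+1}}\big)$, not of the observed environment along the block, so you must genuinely reprove the extension (going back through Lemma \ref{condgamma}, whose conclusion also concerns only the law of $X^{(t)}$) rather than merely ``inspect'' it. The extension is true --- on $\Gamma$ the block's observed states are a function of the block clocks, the block arrows and fresh $\nu_\rho$-states --- but it is an additional lemma, not a citation. Net assessment: correct, with one step done by a heavier (and slightly under-justified as written) but more general argument than the paper's.
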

\begin{proof}
Since $X$ is Markovian under the quenched measure,
\be{martingale}X_t - \int_0^t (\alpha_1-\beta_1)\xi_s(X_s) + (\alpha_0-\beta_0)(1-\xi_s(X_s))\di s \ee
is a martingale under $P_{X}^{\xi}$ for a.e. $\xi$. Hence by Theorem \ref{LLNforEP} the limit in \eqref{eq:density} exists and satisfies \eqref{eq:speeddensity}. We proceed to prove \eqref{eq:density-perturbation}. Write
\begin{align*}
\int_0^t\E_{\nu_\rho}\left[\xi_s\left(X_s\right)\right] \di s = 
& \int_0^t \P_{\nu_\rho}\left(\gamma_s(X_s)\in A_s,\xi_s(X_s) = 1 \right)\di s \\
+ & \int_0^t \P_{\nu_\rho} \left(\gamma_s(X_s)\notin A_s,\xi_s(X_s)=1 \right)\di s.
\end{align*}
The first term is bounded by
\be{timeonmarked} L_t:= \E_{\nu_\rho} \left[ \int_0^t \ind_{\left\{\gamma_s(X_s)\in A_s\right\}} \di s \right], \ee
the expected time spent by the walker on marked agents up to time $t$. 
For the second term, we use Lemma \ref{FreshBernoulli}:
\begin{align*}
\int_0^t \P_{\nu_\rho} \left(\gamma_s(X_s)\notin A_s, \xi_s(X_s) = 1 \right) \di s = 
& \int_0^t \E_{\nu_\rho}\left[ \ind_{\{\gamma_s(X_s)\notin A_s\}}\E_{\nu_\rho}\left[\xi_s\left(X_s\right) \,\middle|\, \mathcal{G}_s \right] \right] \di s  \\
= & \; \rho \int_0^t \P_{\nu_\rho} \left(\gamma_s(X_s)\notin A_s \right) \di s = \rho \left(t - L_t \right). 
\end{align*}
Hence
\be{boundbytime}
\left| \int_0^t \E_{\nu_\rho} \left[\xi_s(X_s)\right]\di s - \rho t \right| \leq L_t. 
\ee
In order to bound $L_t$, consider the total time that the walker spends on top of a single marked agent $x$. 
If $t$ is the time when this agent is marked, the agent will never cross to the right of $Y^t(\gamma^{-1}_t(x))$. 
On the other hand, after time $t$, $X$ will never be to the left of $M-M_t +\gamma^{-1}_t(x)-1$. 
Hence the time spent on the marked agent $x$ is bounded by the total time during which $Y^t(\gamma^{-1}_t(x))$ is to the right of $M-M_t+\gamma^{-1}_t(x)$. 
Writing $t_x = \inf\{t\geq 0 : x \in A_t\}$, we get
\begin{align}\label{boundtimeonA}
 L_t &\leq \sum_{x\in\Z}\E_{\nu_\rho} \left[ \ind_{\{t_x < t\}}\int_{t_x}^\infty \ind_{\{ Y^{t_x}_s(\gamma_{t_x}^{-1}(x)) > M_s-M_{t_x}+\gamma_{t_x}^{-1}(x) \}} \di s \right] \nonumber \\
 &= \E_{\nu_{\rho}}\left[|A_t|\right] \E_{\nu_{\rho}}\left[\int_{0}^\infty \ind_{\{Y^0_s(0) > M_s \}} \di s \right].
\end{align}
When $\lambda$ is small enough, \eqref{drift2} is satisfied, and the term with the integral in \eqref{boundtimeonA} is uniformly bounded by some constant $C \in (0,\infty)$. On the other hand, the number of marked agents $|A_t|$ is bounded by $\hN_t$, so finally we have
\begin{equation*} 
\left| \int_0^t\E_{\nu_\rho}\left[\xi_s(X_s) \right]\di s - \rho t \right| \leq L_t \leq t C \Big(|\alpha_1(\lambda)-\alpha_0(\lambda)| + |\beta_1(\lambda)-\beta_0(\lambda)|\Big) ,
\end{equation*}
proving \eqref{eq:density-perturbation}.
\end{proof}

\begin{proof}[Proof of Theorem \ref{ERforEP}]
Write
\begin{align*}
    \frac{v(\lambda)-v(0)}{\lambda}
& = \frac{(\alpha_1(\lambda)-\beta_1(\lambda)) - (\alpha_1(0)-\beta_1(0))}{\lambda}\hat\rho(\lambda)  \\
& \quad+ (\alpha_1(0)-\beta_1(0))\frac{\hat\rho(\lambda)-\hat\rho(0)}{\lambda}    \\
& \quad+ \frac{(\alpha_0(\lambda)-\beta_0(\lambda)) - (\alpha_0(0)-\beta_0(0))}{\lambda}(1-\hat\rho(\lambda)) \\
& \quad+ (\alpha_0(0)-\beta_0(0))\frac{(1-\hat\rho(\lambda))-(1-\hat\rho(0))}{\lambda}    \\
& = \frac{(\alpha_1(\lambda)-\beta_1(\lambda)) - (\alpha_1(0)-\beta_1(0))}{\lambda}\hat\rho(\lambda) \\
&\quad+ \frac{(\alpha_0(\lambda)-\beta_0(\lambda)) - (\alpha_0(0)-\beta_0(0))}{\lambda}(1-\hat\rho(\lambda)).
\end{align*}
Now take the limit as $\lambda \downarrow 0$ and use \eqref{eq:density-perturbation} to get
\begin{align*}
v'(0) &= \left(\alpha\frac{F_1}{\rho}+\beta\frac{F_1}{\rho}\right)\rho + \left(\alpha\frac{F_0}{1-\rho}+\beta\frac{F_0}{1-\rho}\right)(1-\rho) \\
&=(\alpha+\beta)(F_1+F_0) = \alpha+\beta.
\end{align*}
\end{proof}

\newpage

\end{document}